\documentclass[11pt]{amsart}
\usepackage{mathrsfs}
\usepackage{amssymb}

\pagestyle{plain}

\usepackage{amscd}
\usepackage{amsmath, amssymb}
\usepackage{amsfonts}
\usepackage[colorlinks,linkcolor=blue,citecolor=blue, pdfstartview=FitH]{hyperref}
\usepackage[all]{xy}

  \setlength{\textwidth}{5.6in} \setlength{\oddsidemargin}{0.3in}
\setlength{\evensidemargin}{0.3in}\setlength{\footskip}{0.3in}
\setlength{\headsep}{0.25in}
\numberwithin{equation}{section}

\theoremstyle{plain}
\newtheorem{thm}{Theorem}[section]
\newtheorem{theorem}[thm]{Theorem}
\newtheorem{lemma}[thm]{Lemma}
\newtheorem{corollary}[thm]{Corollary}
\newtheorem{proposition}[thm]{Proposition}

\theoremstyle{definition}

\newtheorem{defn-thm}[thm]{Definition-Theorem}

\newcommand{\Image}{{ \textrm{Im}~}}
\newcommand{\Hom}{{ \textrm{Hom}~}}

\begin{document}
\title{On the obstructions of deforming vector forms}
\author{Wei Xia}
\address{Wei Xia, School of Mathematics, Sun Yat-sen University, Guangzhou, P.R.China, 510275.} \email{xiaweiwei3@126.com, xiaw9@mail.sysu.edu.cn}

\thanks{This work was supported by the National Natural Science Foundations of China No. 11901590.}

\begin{abstract}
In this note, we show that the obstruction classes of deforming vector forms on a compact K\"ahler manifold is annihilated by cohomology classes.

\vskip10pt
\noindent
{\bf Key words:} deformation theory, vector forms, obstructions.
\vskip10pt
\noindent
{\bf MSC~Classification (2010):} 32G05, 32L10, 55N30, 32G99
\end{abstract}

\maketitle
%
%
\section
{\bf Introduction}
Let $X$ be a compact K\"ahler manifold, then a well-known theorem of Clemens \cite[Thm.\,10.1]{Cle99} says that the obstruction classes of deforming the complex structure on $X$ is annihilated by cohomology classes:
\[
\{\text{obstructions}\}\subseteq \ker\left\{ H^{2}(X,T^{1,0})\longrightarrow \bigoplus_{r,s}\Hom (H^{r,s}(X), H^{r-1,s+2}(X)) \right\}.
\]
See \cite{Man04} for an algebraic proof of this result from the DGLA point of view. The main objective of this note is to prove an analogue of this theorem in the case of deforming vector forms on K\"ahler manifolds. In fact, denote by $\mathbb{C}\{t\}$ be the ring of convergent power series in $t\in \mathbb{C}^N$, $\mathfrak{m}$ the maximal ideal of $\mathbb{C}\{t\}$ and $\mathfrak{q}\subseteq\mathfrak{p}\subseteq\mathfrak{m}$ be proper ideals, we have
\begin{theorem}\label{main result 0}
Let $\phi=\phi(t)\in A^{0,1}(X,T^{1,0})\otimes \mathfrak{m}$ be a deformation of the compact K\"ahler manifold $X$ over $Spec \frac{\mathbb{C}\{t\}}{\mathfrak{q}}$. Assume $\sigma \in A^{0,q}(X,T^{1,0})\otimes \mathbb{C}\{t\}$ is a deformation of $[y]\in H_{\bar{\partial}}^{0,q}(X,T^{1,0})$ on $Spec \frac{\mathbb{C}\{t\}}{\mathfrak{p}}$ with $\mathfrak{q}\subseteq\mathfrak{p}\subseteq\mathfrak{m}$, then we have
\[
i_{[\phi,\sigma]}\omega_0=0\in H_{\bar{\partial}}^{r-1,s+q+1}(X)\otimes \frac{\mathfrak{p}}{\mathfrak{mp}},
\]
for any $\omega_0\in \mathcal{H}^{r,s}(X)$.
\end{theorem}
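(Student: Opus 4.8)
The plan is to translate the hypotheses into Maurer--Cartan form, use a Cartan-type identity to see that the quantity in the theorem is a well-defined obstruction class, then use the Tian--Todorov lemma to present it, modulo $\bar\partial$-exact forms, as a $\partial$-exact form, and finally invoke the $\partial\bar\partial$-lemma on the compact K\"ahler manifold $X$. Concretely: that $\phi$ is a deformation of $X$ over $\operatorname{Spec}\frac{\mathbb C\{t\}}{\mathfrak q}$ means $\bar\partial\phi-\tfrac12[\phi,\phi]\in A^{0,2}(X,T^{1,0})\otimes\mathfrak q$, and that $\sigma$ (with $\sigma\equiv y$ mod $\mathfrak m$) deforms $[y]$ over $\operatorname{Spec}\frac{\mathbb C\{t\}}{\mathfrak p}$ means
\[
\Psi:=\bar\partial\sigma-[\phi,\sigma]\in A^{0,q+1}(X,T^{1,0})\otimes\mathfrak p .
\]
Expanding $\bar\partial\Psi$, substituting the two Maurer--Cartan equations, and using the graded Jacobi identity (which gives $[\phi,[\phi,\sigma]]=\tfrac12[[\phi,\phi],\sigma]$) together with $\mathfrak q\subseteq\mathfrak p$ and $\phi\in\mathfrak m$, one finds $\bar\partial\Psi\in A^{0,q+2}(X,T^{1,0})\otimes\mathfrak{mp}$, so $\Psi$ represents the obstruction class in $H^{0,q+1}_{\bar\partial}(X,T^{1,0})\otimes\frac{\mathfrak p}{\mathfrak{mp}}$. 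The Cartan identity $[\bar\partial,i_\psi]=i_{\bar\partial\psi}$, applied with $\psi=\sigma$ on $\omega_0$ and using $\bar\partial\omega_0=0$ (valid since $\omega_0$ is harmonic on a compact K\"ahler manifold), gives $\bar\partial(i_\sigma\omega_0)=i_{\bar\partial\sigma}\omega_0$, hence
\[
i_{[\phi,\sigma]}\omega_0=\bar\partial(i_\sigma\omega_0)-i_\Psi\omega_0 .
\]
So modulo $\bar\partial$-exact forms $i_{[\phi,\sigma]}\omega_0\equiv -i_\Psi\omega_0$ is $\frac{\mathfrak p}{\mathfrak{mp}}$-valued and $\bar\partial$-closed there; the left-hand side of the theorem is therefore a genuine class in $H^{r-1,s+q+1}_{\bar\partial}(X)\otimes\frac{\mathfrak p}{\mathfrak{mp}}$, and it suffices to prove that $i_{[\phi,\sigma]}\omega_0$ is $\bar\partial$-exact modulo $\mathfrak{mp}$.

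For this I would use the Tian--Todorov identity, which on any complex manifold writes contraction by a bracket as a graded commutator of contractions with $\partial$, namely $i_{[\eta,\mu]}=\pm\,[[\partial,i_\eta],i_\mu]$ on $A^{\bullet,\bullet}(X)$. Setting $\mathcal L_\eta:=[\partial,i_\eta]$ and applying this with $\eta=\phi$, $\mu=\sigma$ on $\omega_0$, and using $\partial\omega_0=0$ (again because $\omega_0$ is K\"ahler-harmonic), one obtains
\[
i_{[\phi,\sigma]}\omega_0=\pm\Bigl(\partial\bigl(i_\phi i_\sigma\omega_0\bigr)-i_\phi\,\partial\bigl(i_\sigma\omega_0\bigr)\mp i_\sigma\,\partial\bigl(i_\phi\omega_0\bigr)\Bigr).
\]
The first term is $\partial$-exact. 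The heart of the argument is to show that, modulo $\bar\partial$-exact forms and modulo $\mathfrak{mp}$, the other two terms $i_\phi\,\partial(i_\sigma\omega_0)$ and $i_\sigma\,\partial(i_\phi\omega_0)$ are also $\partial$-exact. Granting this, $i_{[\phi,\sigma]}\omega_0$ is $\partial$-exact up to $\bar\partial$-exact forms and $\mathfrak{mp}$; since it is also $\bar\partial$-closed modulo $\mathfrak{mp}$, the $\partial\bar\partial$-lemma on $X$ --- the single point at which compactness and the K\"ahler condition are used --- upgrades it to a $\bar\partial$-exact form modulo $\mathfrak{mp}$, which is exactly the assertion.

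The step I expect to be the main obstacle is that last control on $i_\phi\,\partial(i_\sigma\omega_0)$ and $i_\sigma\,\partial(i_\phi\omega_0)$, which I would obtain by induction on the $\mathfrak m$-adic order after writing $\phi=\sum_{n\ge1}\phi_n$, $\sigma=\sum_{n\ge0}\sigma_n$ with $\sigma_0=y$. The mechanism is the commutation relation $[\bar\partial,\mathcal L_\eta]=-\mathcal L_{\bar\partial\eta}$ (from $[\bar\partial,\partial]=0$, $[\bar\partial,i_\eta]=i_{\bar\partial\eta}$ and the graded Jacobi identity): when $\eta$ is $\bar\partial$-closed, $\mathcal L_\eta$ graded-commutes with $\bar\partial$, hence sends $\bar\partial$-exact forms to $\bar\partial$-exact forms, while $\mathcal L_\eta=[\partial,i_\eta]$ sends $\partial$-closed forms to $\partial$-exact ones. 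At lowest order $\phi_1$ and $\sigma_0=y$ are $\bar\partial$-closed, and $\partial(i_{\phi_1}\omega_0)$, $\partial(i_y\omega_0)$ are $\partial$-exact and $\bar\partial$-closed, hence $\partial\bar\partial$-exact by the $\partial\bar\partial$-lemma; writing these as $\partial\bar\partial$ of explicit primitives and feeding the primitives through $\mathcal L_{\phi_1}$ (resp. $\mathcal L_{\sigma_0}$) together with the commutation facts expresses $i_{\phi_1}\partial(i_y\omega_0)$ and $i_y\partial(i_{\phi_1}\omega_0)$ as a $\partial$-exact plus a $\bar\partial$-exact form. The inductive step repeats this using $\bar\partial\phi_n=\tfrac12[\phi,\phi]_n$ and $\bar\partial\sigma_n=[\phi,\sigma]_n+\Psi_n$, whose right-hand sides are of strictly lower $\mathfrak m$-order --- except for $\Psi_n\in\mathfrak p$, which enters only modulo $\mathfrak{mp}$ since $\Psi$ is $\bar\partial$-closed modulo $\mathfrak{mp}$ --- so the inductive hypothesis bounds the error terms arising when $\bar\partial$ is commuted past $\mathcal L_{\phi_n}$, and the $\partial\bar\partial$-lemma is reapplied at each order. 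This is precisely the recursive bookkeeping in Clemens' proof \cite{Cle99}, here carried out for a deforming cohomology class $[y]$ in place of the complex structure; the two delicate points are keeping every discarded term inside $\bar\partial A^{\bullet,\bullet}(X)\otimes\mathbb C\{t\}+A^{\bullet,\bullet}(X)\otimes\mathfrak{mp}$ (where $\mathfrak q\subseteq\mathfrak p$ and $\phi\in\mathfrak m$ are needed) and the essential use of the $\partial\bar\partial$-lemma, in whose absence the statement fails, just as Clemens' theorem does on non-K\"ahler manifolds.
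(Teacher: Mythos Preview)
Your approach is genuinely different from the paper's. The paper does not use the Tian--Todorov identity or an explicit order-by-order recursion; instead it deforms the \emph{form} $\omega_0$. By Proposition~\ref{prop-canonical-deformation} (unobstructedness of deforming $(p,q)$-forms on a K\"ahler manifold) there exists $\omega\in A^{r,s}(X)\otimes\mathbb{C}\{t\}$ with $\omega\equiv\omega_0\bmod\mathfrak{m}$ and $\bar\partial_\phi\omega\in\mathfrak{q}$. A single computation of $\bar\partial(i_\sigma\omega)$, using only the commutator identity $[\mathcal{L}_\phi^{1,0},i_\sigma]=i_{[\phi,\sigma]}$ from \eqref{eq-Li}, then yields
\[
i_{[\phi,\sigma]}\omega_0+\bar\partial_\phi(i_\sigma\omega)\equiv\bar\partial(i_\sigma\omega_0)\pmod{\mathfrak{mp}},
\]
and the conclusion follows from Lemma~\ref{lem} (that $\mathcal{H}^{p,q}(X)\cap\operatorname{Im}\bar\partial_\phi=0$). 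In particular the K\"ahler hypothesis enters only through Proposition~\ref{prop-canonical-deformation}, not through the $\partial\bar\partial$-lemma.

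Your route --- Tian--Todorov plus the $\partial\bar\partial$-lemma plus a Clemens-style induction --- is in the spirit of Manetti's algebraic proof of Clemens' theorem and would be self-contained, not presupposing the form-deformation result. What the paper's approach buys is that all of the ``recursive bookkeeping'' you flag as the main obstacle is absorbed into the single existence statement for $\omega$, reducing the proof to a five-line calculation. Your inductive step is plausible but only sketched; if you pursue it, note that $\sigma_0=y$ carries no factor of $\mathfrak{m}$, so the induction must be on total $\mathfrak{m}$-degree rather than on the indices of $\phi$ and $\sigma$ separately, and you must check that every appearance of $\Psi$ in the recursion is accompanied by a further contraction with $\phi\in\mathfrak{m}$ so that it lands in $\mathfrak{mp}$.
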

This result can also be intuitively reformulated as follows:
\[
\{\text{obstructions}\}\subseteq \ker\left\{ H^{q+1}(X,T^{1,0})\longrightarrow \bigoplus_{r,s}\Hom (H^{r,s}(X), H^{r-1,s+q+1}(X)) \right\}.
\]
We note that the K\"ahler assumption is not necessary here. In order that Theorem \ref{main result 0} to hold, it is enough to assume the deformations of $(p,q)$-forms on $X$ are unobstructed. This will be clear from the proof given in Section \ref{sec-Main-result}.

\section{Background and terminology}
\label{sec-Background-terminology}
In this section, we review some background material and set up our terminology that will be used afterwards.

By definition~\cite{Cat13}, a \emph{deformation} of a compact complex space $X$ is a flat proper morphism $\pi: (\mathcal{X}, X_{0}) \to (B, 0)$ between connected complex spaces with $X_{0}:=\pi^{-1}(0)\cong X$ and a germ of deformation is called a \emph{small deformation}. In this paper, we will only consider small deformations of compact complex manifolds and simply call these deformations.

Denote by $\mathbb{C}\{t\}$ be the ring of convergent power series in $t\in \mathbb{C}^N$ and $\mathfrak{m}$ the maximal ideal of $\mathbb{C}\{t\}$. For any proper ideal $\mathfrak{q}\subset\mathfrak{m}$, the analytic spectra $Spec \frac{\mathbb{C}\{t\}}{\mathfrak{q}}$ of the analytic algebra $\frac{\mathbb{C}\{t\}}{\mathfrak{q}}$ is the analytic singularity $(\mathbb{C}^N, 0, \mathfrak{q})$~\cite[pp.\,36]{Ma05}. Equivalently, The (small) deformation $\pi: (\mathcal{X}, X_{0}) \to (B, 0)$ may be given by $\phi\in A^{0,1}(X,T^{1,0})\otimes \mathfrak{m}$ such that $\bar{\partial}\phi - \frac{1}{2}[\phi ,\phi ]\in A^{0,2}(X,T^{1,0})\otimes \mathfrak{q}$ and $(B, 0)=Spec \frac{\mathbb{C}\{t\}}{\mathfrak{q}}$ as analytic singularities. Let $\phi\in A^{0,1}(X,T^{1,0})\otimes \mathfrak{m}$ be a deformation of $X$ over $Spec \frac{\mathbb{C}\{t\}}{\mathfrak{q}}$ and $\mathfrak{q}'$ be a proper ideal of $\mathbb{C}\{t\}$ such that $\mathfrak{mq}\subseteq\mathfrak{q}'\subseteq\mathfrak{q}\subseteq\mathfrak{m}$, we say $\phi$ has an \emph{extension} from $Spec \frac{\mathbb{C}\{t\}}{\mathfrak{q}}$ to $Spec \frac{\mathbb{C}\{t\}}{\mathfrak{q}'}$ if
there exists $\alpha\in A^{0,1}(X,T^{1,0})\otimes \mathfrak{q}$ such that $\phi-\alpha$ is a deformation over $Spec \frac{\mathbb{C}\{t\}}{\mathfrak{q}'}$. $[\phi,\phi]\in H^{2}(X,T^{1,0})\otimes \frac{\mathfrak{q}}{\mathfrak{mq}}$ is called the \emph{obstruction class}.

Denote the space of harmonic $(p,q)$-forms by $\mathcal{H}^{p,q}(X)$. Let $\phi\in A^{0,1}(X,T^{1,0})\otimes \mathfrak{m}$ be a deformation of the compact K\"ahler manifold $X$ over $Spec \frac{\mathbb{C}\{t\}}{\mathfrak{q}}$. The theorem of Clemens \cite[Thm.\,10.1]{Cle99} says that
\[
i_{[\phi,\phi]}\omega_0=0\in H_{\bar{\partial}}^{r-1,s+2}(X)\otimes \frac{\mathfrak{q}}{\mathfrak{mq}},
\]
for any $\omega_0\in \mathcal{H}^{r,s}(X)$.

In our previous work \cite{Xia19dDol}, a deformation theory is established for Dolbeault cohomology classes valued in holomorphic tensor bundles. In this paper, we will concentrate on the case of tangent bundle, i.e. deformations of vector forms. For our purpose, we may reformulate the definition for deformations of vector forms as follows. Let $\phi\in A^{0,1}(X,T^{1,0})\otimes \mathfrak{m}$ be a deformation of $X$ over $Spec \frac{\mathbb{C}\{t\}}{\mathfrak{q}}$ and $\mathfrak{p}$ be a proper ideal of $\mathbb{C}\{t\}$ such that $\mathfrak{q}\subseteq\mathfrak{p}\subseteq\mathfrak{m}$. For any $[y]\in H_{\bar{\partial}}^{0,q}(X,T^{1,0})$, a \emph{deformation} of $[y]$ (w.r.t. $\phi$ ) on $Spec \frac{\mathbb{C}\{t\}}{\mathfrak{p}}$ is given by $\sigma \in A^{0,q}(X,T^{1,0})\otimes \mathbb{C}\{t\}$ such that
\begin{itemize}
  \item[1.] $\sigma -y=0\in H_{\bar{\partial}}^{0,q}(X,E)\otimes \frac{\mathbb{C}\{t\}}{\mathfrak{m}}$;
  \item[2.] $\bar{\partial}_{\phi}\sigma  := \bar{\partial}\sigma  - [\phi  , \sigma  ] \in A^{0,q}(X,E)\otimes \mathfrak{p}$.
\end{itemize}
Likewise, let $\sigma \in A^{0,q}(X,T^{1,0})\otimes \mathfrak{m}$ be a deformation of $[y]\in H_{\bar{\partial}}^{0,q}(X,T^{1,0})$ on $Spec \frac{\mathbb{C}\{t\}}{\mathfrak{p}}$ and $\mathfrak{p}'$ be a proper ideal of $\mathbb{C}\{t\}$ such that $\mathfrak{q}\subseteq\mathfrak{mp}\subseteq\mathfrak{p}'\subseteq\mathfrak{p}\subseteq\mathfrak{m}$, we say $\sigma$ has an \emph{extension} from $Spec \frac{\mathbb{C}\{t\}}{\mathfrak{p}}$ to $Spec \frac{\mathbb{C}\{t\}}{\mathfrak{p}'}$ if
there exists $\beta\in A^{0,q}(X,T^{1,0})\otimes \mathfrak{q}$ such that $\sigma-\beta$ is a deformation over $Spec \frac{\mathbb{C}\{t\}}{\mathfrak{p}'}$. $[\phi,\sigma]\in H^{q+1}(X,T^{1,0})\otimes \frac{\mathfrak{p}}{\mathfrak{mp}}$ is called the \emph{obstruction class}. Indeed, we have modulo $\mathfrak{mp}$,
\begin{align*}
\bar{\partial}[\phi,\sigma]&=[\bar{\partial}\phi,\sigma]-[\phi,\bar{\partial}\sigma]\\
&\equiv \frac{1}{2}[[\phi ,\phi ],\sigma] - [\phi,[\phi,\sigma]]\\
&=\frac{1}{2}[[\phi ,\phi ],\sigma] - \frac{1}{2}[[\phi ,\phi ],\sigma]\\
&=0 .
\end{align*}
On the other hand, if there exists $\beta\in A^{0,q}(X,T^{1,0})\otimes \mathfrak{q}$ such that $\bar{\partial}\sigma  - [\phi  , \sigma  ]- \bar{\partial}\beta\in \mathfrak{p}'$, then $\sigma-\beta$ is a extension of $\sigma$ from $Spec \frac{\mathbb{C}\{t\}}{\mathfrak{p}}$ to $Spec \frac{\mathbb{C}\{t\}}{\mathfrak{p}'}$.

The following result will be essential for us:
\begin{proposition}\label{prop-canonical-deformation}
Let $X$ be a compact K\"ahler manifold and $\phi=\phi(t)\in A^{0,1}(X,T^{1,0})\otimes \mathfrak{m}$ be a deformation of the compact complex manifold $X$ over $Spec \frac{\mathbb{C}\{t\}}{\mathfrak{q}}$. Then given any $\omega_0\in \mathcal{H}^{r,s}(X)$, there exists a deformation $\omega\in A^{p,q}(X)\otimes \mathbb{C}\{t\}$ of $\omega_0$ on $Spec \frac{\mathbb{C}\{t\}}{\mathfrak{q}}$.
\end{proposition}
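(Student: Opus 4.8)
The plan is to construct $\omega$ as the harmonic $(r,s)$-component of the flat lift of $\omega_0$ to the formal deformation $X_t$, so that the whole assertion reduces to the $\partial\bar\partial$-lemma on $X$ and on its nearby complex structures.

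I first recall from \cite{Xia19dDol} the shape of the definitions. Write $\Phi_t$ for the $\mathbb{C}\{t\}/\mathfrak q$-linear isomorphism of $A^{r,s}(X)\otimes\mathbb{C}\{t\}/\mathfrak q$ with the $(r,s)$-forms of the formal deformation $X_t$ determined by $\phi$; this makes sense because $\phi$ is integrable modulo $\mathfrak q$, and $\Phi_t$ reduces to the identity modulo $\mathfrak m$. One then has the transported differentials $d_\phi:=\Phi_t^{-1}\circ d\circ\Phi_t=\partial_\phi+\bar\partial_\phi$, with $\partial_\phi,\bar\partial_\phi$ of the usual bidegrees and $\bar\partial_\phi^2\equiv\partial_\phi^2\equiv 0$ modulo $\mathfrak q$, and a deformation of $\omega_0$ over $Spec\,\mathbb{C}\{t\}/\mathfrak q$ is an $\omega\in A^{r,s}(X)\otimes\mathbb{C}\{t\}$ with $\omega\equiv\omega_0$ in $H^{r,s}_{\bar\partial}(X)\otimes\mathbb{C}\{t\}/\mathfrak m$ and $\bar\partial_\phi\omega\in A^{r,s+1}(X)\otimes\mathfrak q$. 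The starting observation is that $\widetilde\omega:=\Phi_t^{-1}\omega_0$ is $d_\phi$-closed over $Spec\,\mathbb{C}\{t\}/\mathfrak q$, simply because $d_\phi\widetilde\omega=\Phi_t^{-1}(d\omega_0)=0$, $\omega_0$ being harmonic and hence $d$-closed on the compact K\"ahler manifold $X$.

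Next I would run Hodge theory on $X_t$ modulo $\mathfrak q$. Transporting a fixed K\"ahler metric of $X$ by $\Phi_t$ gives, modulo $\mathfrak q$, a K\"ahler metric on each $X_t$ to this order, with Laplacian $\Delta_\phi$ reducing to $\Delta$ modulo $\mathfrak m$; and since $X$ is K\"ahler the Hodge numbers $h^{p,q}(X_t)$ are constant --- the classical consequence of upper semicontinuity of $h^{p,q}$, Hodge symmetry, and $\sum_{p+q=k}h^{p,q}=b_k$ --- so the harmonic projection and the $\partial_\phi\bar\partial_\phi$-lemma (the $\partial\bar\partial$-lemma of $X_t$) persist modulo $\mathfrak q$. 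Let $\omega$ be the harmonic $(r,s)$-component of $\widetilde\omega$ for $\Delta_\phi$. By the K\"ahler identities on $X_t$ this $\omega$ is $\bar\partial_\phi$-closed modulo $\mathfrak q$, and at $t=0$ it equals $\omega_0$, since there $\Phi_t$ and the harmonic projection both fix the already harmonic form $\omega_0$; hence $\omega$ is the desired deformation. Equivalently, in the obstruction-theoretic language of \cite{Xia19dDol} one constructs $\omega$ step by step along the $\mathfrak m$-adic filtration, and the obstruction at each step is a $\bar\partial$-closed $(r,s+1)$-form which --- being the corresponding obstruction of the $d_\phi$-closed class of $\widetilde\omega$ --- is also $\partial$-closed and $d$-exact, hence $\bar\partial$-exact by the $\partial\bar\partial$-lemma, so that every obstruction vanishes.

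Convergence of $\omega=\sum_\nu\omega_\nu t^\nu$ is obtained as in Kodaira--Spencer theory, by carrying out the successive corrections with the Green operator of the fixed K\"ahler metric and invoking the elliptic a priori estimates, or more directly by realising $Spec\,\mathbb{C}\{t\}/\mathfrak q$ inside a polydisc over which the Kuranishi family of $X$ is defined and taking $\omega$ to be induced by a holomorphic section of the bundle of $(r,s)$-classes of that family. I expect the only real point to be the persistence of the $\partial\bar\partial$-lemma and of the Hodge decomposition for the nearby complex structures: this is precisely where compactness and the K\"ahler hypothesis --- equivalently, the degeneration of the Fr\"olicher spectral sequence --- are used, and without it the obstructions need not vanish. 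Everything else is formal bookkeeping with the $\mathfrak m$-adic filtration.
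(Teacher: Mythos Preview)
Your argument is correct and amounts to a detailed sketch of exactly the unobstructedness result that the paper simply quotes: the paper's own proof is the one-line remark that deformations of $(p,q)$-forms on compact K\"ahler manifolds are unobstructed, with a reference to \cite[Thm.\,8.1]{Xia19dDol} and \cite[Thm.\,10.1]{Cle99}. Your obstruction-theoretic paragraph---showing that each successive obstruction is $\bar\partial$-exact via the $\partial\bar\partial$-lemma---is precisely the mechanism behind those references, so there is no real difference in strategy, only in how much you have unpacked.
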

\begin{proof}This follows from the fact that the deformations of $(p,q)$-forms on K\"ahler manifolds are unobstructed, see \cite[Thm.\,8.1]{Xia19dDol} or \cite[Thm.\,10.1]{Cle99}.
\end{proof}
Furthermore, we recall the following useful formula from \cite[Lem.\,3.2]{LRY15}:
\begin{equation}\label{eq-Li}
[\mathcal{L}^{1,0}_{\phi},i_{\psi}]=\mathcal{L}^{1,0}_{\phi}i_{\psi}-(-1)^{k(l-1)}i_{\psi}\mathcal{L}^{1,0}_{\phi}=i_{[\phi,\psi]},
\end{equation}
for any $\phi\in A^{0,k}(X,T^{1,0})$, $\psi\in A^{0,l}(X,T^{1,0})$. See also \cite[Lem.\,3.7]{Xia18da}.

\section{Main result}\label{sec-Main-result}
\begin{lemma} \label{lem}
Let $\phi=\phi(t)\in A^{0,1}(X,T^{1,0})\otimes \mathfrak{m}$ be a deformation of any compact complex manifold $X$ over $Spec \frac{\mathbb{C}\{t\}}{\mathfrak{q}}$. Then
\[
\mathcal{H}^{p,q}(X)\cap\Image\bar{\partial}_{\phi(t)}=0.
\]
\end{lemma}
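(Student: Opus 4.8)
The plan is to reduce the statement to the classical fact that $\mathcal{H}^{p,q}(X) \cap \Image \bar\partial = 0$ for the undeformed $\bar\partial$-operator, by exploiting that $\bar\partial_{\phi(t)} = \bar\partial - \mathcal{L}^{1,0}_{\phi(t)}$ (or $\bar\partial - [\phi,\cdot]$) is a power-series perturbation of $\bar\partial$. First I would clarify the ground ring: the statement lives in $A^{p,q}(X)\otimes \mathbb{C}\{t\}$ and the claim should be read as: if $\eta = \bar\partial_{\phi(t)}\beta$ for some $\beta \in A^{p,q-1}(X)\otimes\mathbb{C}\{t\}$ and the $t$-independent part (the image in the fiber over $0$) of $\eta$ is harmonic, then that harmonic part is $0$; equivalently, no nonzero constant harmonic form lies in $\Image\bar\partial_{\phi(t)}$.

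Next I would argue by looking at the constant term. Write $\phi(t) = \sum_{|I|\geq 1}\phi_I t^I$ with no constant term (since $\phi \in A^{0,1}(X,T^{1,0})\otimes\mathfrak{m}$), so modulo $\mathfrak{m}$ we have $\bar\partial_{\phi(t)} \equiv \bar\partial$. Hence if $\omega_0 \in \mathcal{H}^{p,q}(X)$ and $\omega_0 = \bar\partial_{\phi(t)}\beta$ for some $\beta = \sum \beta_I t^I \in A^{p,q-1}(X)\otimes\mathbb{C}\{t\}$, comparing constant terms gives $\omega_0 = \bar\partial \beta_0$, i.e. $\omega_0 \in \mathcal{H}^{p,q}(X)\cap\Image\bar\partial$. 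By the standard Hodge-theoretic orthogonal decomposition $A^{p,q}(X) = \mathcal{H}^{p,q}(X)\oplus\Image\bar\partial\oplus\Image\bar\partial^*$, this intersection is zero, so $\omega_0 = 0$.

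The one subtlety — and the step I expect to require the most care — is pinning down exactly what ``$\mathcal{H}^{p,q}(X)\cap\Image\bar\partial_{\phi(t)}$'' means as a submodule and checking the reduction to the constant term is legitimate in all the ways the lemma is later used. If the intended reading is the naive one (an element of $A^{p,q}(X)\otimes\mathbb{C}\{t\}$ lying in the image of $\bar\partial_{\phi(t)}$ whose underlying form, coefficient by coefficient, is harmonic), the argument above still works degree by degree in $t$: one shows inductively that $\eta = \bar\partial_{\phi(t)}\beta$ with all coefficients of $\eta$ harmonic forces all coefficients of $\eta$ to vanish, using at each stage that the lowest nonvanishing coefficient of $\eta$ equals $\bar\partial$ of the corresponding coefficient of $\beta$ plus lower-order correction terms $[\phi_J,\beta_K]$ which, after regrouping, still land in $\Image\bar\partial$. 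I would set up this induction on the total degree $|I|$ and invoke the Hodge decomposition at each step; no analytic input beyond compactness (or the K\"ahler Hodge theory already in force) is needed, which matches the author's remark that the K\"ahler hypothesis is inessential here.
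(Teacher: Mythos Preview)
Your reading of the lemma differs from the paper's. The paper fixes an actual point $t$ of the analytic base $Spec\,\mathbb{C}\{t\}/\mathfrak{q}$, so that $\bar\partial_{\phi(t)}$ is an honest operator on $A^{p,q}(X)$, and asserts that no nonzero element of $\mathcal H^{p,q}(X)$ (harmonic for the central metric) lies in its image. Its argument is analytic: with $t$ held fixed, one varies $s$ near $t$ and applies Kodaira--Spencer upper semi-continuity to $s\mapsto \dim\bigl(\mathcal H^{p,q}_{\phi(s)}(X)\cap \Image\bar\partial_{\phi(t)}\bigr)$; at $s=t$ this dimension is $0$ by ordinary Hodge theory for the operator $\bar\partial_{\phi(t)}$, hence it is $0$ at $s=0$ as well. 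Your ``compare constant terms'' argument says nothing about points $t\neq 0$, where $\bar\partial_{\phi(t)}\neq\bar\partial$ and there is no constant term to isolate.

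Under your formal power-series reading, the constant-term step is indeed correct and pleasantly elementary for a $t$-independent harmonic $\omega_0$. But the inductive extension you sketch for $\eta\in\mathcal H^{p,q}(X)\otimes\mathbb C\{t\}$ has a gap: at order $|I|=k$ the equation reads $\eta_I=\bar\partial\beta_I-\sum_{J+K=I,\,|J|\ge 1}\mathcal L^{1,0}_{\phi_J}\beta_K$, and the terms $\mathcal L^{1,0}_{\phi_J}\beta_K$ have no reason to lie in $\Image\bar\partial$; the inductive hypothesis only yields identities of the form $\bar\partial\beta_{I'}=\sum\mathcal L^{1,0}_{\phi_J}\beta_K$, which do not force the degree-$k$ correction to be $\bar\partial$-exact. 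So the ``regrouping'' you promise does not go through, and some genuine analytic input---such as the semi-continuity the paper invokes---appears to be needed beyond the bare Hodge decomposition on the central fibre.
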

\begin{proof}See the proof of \cite[Thm.\,8.1]{Xia19dDol}. In fact, let $t\in Spec \frac{\mathbb{C}\{t\}}{\mathfrak{q}}$ be fixed and consider the family of vector spaces $\mathcal{H}_{\phi(s)}^{p,q}(X)\cap\Image\bar{\partial}_{\phi(t)}$. We may apply the upper semi-continuity theorem of Kodaira-Spencer to the analytic family of operators $\Box_{\phi(s)}$.
For any $s\in Spec \frac{\mathbb{C}\{t\}}{\mathfrak{q}}$ sufficiently near to $t$ we have
\[
\dim \mathcal{H}_{\phi(s)}^{p,q}(X)\cap\Image\bar{\partial}_{\phi(t)}\leq \dim \mathcal{H}_{\phi(t)}^{p,q}(X)\cap\Image\bar{\partial}_{\phi(t)}=0.
\]
The conclusion then follows this inequality by setting $s=0$ and the fact that $\mathcal{H}^{p,q}(X)\cap\Image\bar{\partial}_{\phi(t)}=\mathcal{H}_{\phi(0)}^{p,q}(X)\cap\Image\bar{\partial}_{\phi(t)}$.
\end{proof}

\begin{theorem}\label{main result}
Let $\phi=\phi(t)\in A^{0,1}(X,T^{1,0})\otimes \mathfrak{m}$ be a deformation of the compact K\"ahler manifold $X$ over $Spec \frac{\mathbb{C}\{t\}}{\mathfrak{q}}$. Assume $\sigma \in A^{0,q}(X,T^{1,0})\otimes \mathbb{C}\{t\}$ is a deformation of $[y]\in H_{\bar{\partial}}^{0,q}(X,T^{1,0})$ on $Spec \frac{\mathbb{C}\{t\}}{\mathfrak{p}}$ with $\mathfrak{q}\subseteq\mathfrak{p}\subseteq\mathfrak{m}$, then we have
\[
i_{[\phi,\sigma]}\omega_0=0\in H_{\bar{\partial}}^{r-1,s+q+1}(X)\otimes \frac{\mathfrak{p}}{\mathfrak{mp}},
\]
for any $\omega_0\in \mathcal{H}^{r,s}(X)$.
\end{theorem}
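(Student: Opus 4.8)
The plan is to reduce the statement, via the contraction formula \eqref{eq-Li} and the Leibniz rule for $\bar{\partial}$ under contraction, to the unobstructedness of deformations of ordinary $(p,q)$-forms on $X$ (Proposition \ref{prop-canonical-deformation}, Lemma \ref{lem}, \cite[Thm.\,8.1]{Xia19dDol}). All congruences below will be modulo $\mathfrak{mp}$, and I will use $\mathfrak{q}\subseteq\mathfrak{mp}$ (the setting in which the obstruction class $[\phi,\sigma]\in H^{q+1}(X,T^{1,0})\otimes\frac{\mathfrak{p}}{\mathfrak{mp}}$ is defined, cf. Section \ref{sec-Background-terminology}), hence also $\mathfrak{mq}\subseteq\mathfrak{mp}$. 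Write $\sigma_0:=\sigma|_{t=0}$, so that $\bar{\partial}\sigma_0=0$, and let $\mathcal{D}_\phi$ denote the operator on $A^{\bullet,\bullet}(X)\otimes\mathbb{C}\{t\}$ governing deformations of $(p,q)$-forms in the sense of \cite{Xia19dDol}; it corresponds to $\bar{\partial}_{X_t}$ under the usual identification, so it is related to $\bar{\partial}_\phi$ on $A^{0,\bullet}(X,T^{1,0})$ by the Leibniz rule $\mathcal{D}_\phi(i_\psi\alpha)=i_{\bar{\partial}_\phi\psi}\alpha\pm i_\psi(\mathcal{D}_\phi\alpha)$ (the Leibniz rule for $\bar{\partial}_{X_t}$ under contraction; compare \eqref{eq-Li}). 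First I would fix, via Proposition \ref{prop-canonical-deformation}, a deformation $\omega\in A^{r,s}(X)\otimes\mathbb{C}\{t\}$ of $\omega_0$ over $Spec\,\frac{\mathbb{C}\{t\}}{\mathfrak{q}}$, so that $\omega\equiv\omega_0\pmod{\mathfrak{m}}$ and $\mathcal{D}_\phi\omega\in A^{r,s+1}(X)\otimes\mathfrak{q}$.

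The first step is a purely cohomological reduction. Using $[\phi,\sigma]=\bar{\partial}\sigma-\bar{\partial}_\phi\sigma$ and $\bar{\partial}\omega_0=0$ one has
\[
i_{[\phi,\sigma]}\omega_0=i_{\bar{\partial}\sigma}\omega_0-i_{\bar{\partial}_\phi\sigma}\omega_0=\bar{\partial}(i_\sigma\omega_0)-i_{\bar{\partial}_\phi\sigma}\omega_0,
\]
so it will be enough to show that $i_{\bar{\partial}_\phi\sigma}\omega_0$ is $\bar{\partial}$-exact modulo $\mathfrak{mp}$. Since $\bar{\partial}_\phi\sigma\in\mathfrak{p}$ while $\omega-\omega_0\in\mathfrak{m}$, we get $i_{\bar{\partial}_\phi\sigma}(\omega-\omega_0)\in\mathfrak{mp}$, hence $i_{\bar{\partial}_\phi\sigma}\omega_0\equiv i_{\bar{\partial}_\phi\sigma}\omega$; and the Leibniz rule $\mathcal{D}_\phi(i_\sigma\omega)=i_{\bar{\partial}_\phi\sigma}\omega\pm i_\sigma(\mathcal{D}_\phi\omega)$ together with $\mathcal{D}_\phi\omega\in\mathfrak{q}\subseteq\mathfrak{mp}$ gives $i_{\bar{\partial}_\phi\sigma}\omega\equiv\mathcal{D}_\phi(i_\sigma\omega)$. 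Thus everything would reduce to proving that $\mathcal{D}_\phi(i_\sigma\omega)$ is $\bar{\partial}$-exact modulo $\mathfrak{mp}$.

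For the second step I would observe that $i_\sigma\omega\in A^{r-1,s+q}(X)\otimes\mathbb{C}\{t\}$ is itself a deformation of the $\bar{\partial}$-closed $(r-1,s+q)$-form $i_{\sigma_0}\omega_0$ over $Spec\,\frac{\mathbb{C}\{t\}}{\mathfrak{p}}$: indeed $i_\sigma\omega\equiv i_{\sigma_0}\omega_0\pmod{\mathfrak{m}}$, and $\mathcal{D}_\phi(i_\sigma\omega)=i_{\bar{\partial}_\phi\sigma}\omega\pm i_\sigma(\mathcal{D}_\phi\omega)\in\mathfrak{p}$ because $\bar{\partial}_\phi\sigma\in\mathfrak{p}$ and $\mathcal{D}_\phi\omega\in\mathfrak{q}\subseteq\mathfrak{p}$. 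Because $X$ is compact K\"ahler, deformations of $(p,q)$-forms on $X$ are unobstructed (\cite[Thm.\,8.1]{Xia19dDol}, also reflected in Lemma \ref{lem}), so $i_\sigma\omega$ extends over $Spec\,\frac{\mathbb{C}\{t\}}{\mathfrak{mp}}$, i.e. there is $\tau\in A^{r-1,s+q}(X)\otimes\mathfrak{q}$ with $\mathcal{D}_\phi(i_\sigma\omega-\tau)\in\mathfrak{mp}$; since $\tau\in\mathfrak{q}$, every $\phi$-correction term of $\mathcal{D}_\phi\tau$ lies in $\mathfrak{mq}\subseteq\mathfrak{mp}$, so $\mathcal{D}_\phi(i_\sigma\omega)\equiv\mathcal{D}_\phi\tau\equiv\bar{\partial}\tau$. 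Combining with the first step,
\[
i_{[\phi,\sigma]}\omega_0\equiv\bar{\partial}(i_\sigma\omega_0)-\mathcal{D}_\phi(i_\sigma\omega)\equiv\bar{\partial}(i_\sigma\omega_0-\tau)\pmod{\mathfrak{mp}},
\]
which would give $i_{[\phi,\sigma]}\omega_0=0$ in $H_{\bar{\partial}}^{r-1,s+q+1}(X)\otimes\frac{\mathfrak{p}}{\mathfrak{mp}}$ (the class lies in this subgroup because, by the first step and $i_{\bar{\partial}_\phi\sigma}\omega_0\in\mathfrak{p}$, it is represented by a $\mathfrak{p}$-valued cocycle).

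The main obstacle I expect is in the second step: one must be sure that the unobstructedness of \cite[Thm.\,8.1]{Xia19dDol} — normally phrased, via Proposition \ref{prop-canonical-deformation}, in terms of harmonic representatives — is available for the possibly non-harmonic $(r-1,s+q)$-form $i_{\sigma_0}\omega_0$; equivalently, that \emph{every} deformation of a $\bar{\partial}$-closed $(p,q)$-form extends, not merely that one exists. (If one prefers to invoke Proposition \ref{prop-canonical-deformation} verbatim, one can first replace $i_\sigma\omega$ by $i_\sigma\omega$ minus a $\mathcal{D}_\phi$-exact form whose leading coefficient is $i_{\sigma_0}\omega_0-\mathcal{H}(i_{\sigma_0}\omega_0)$, which alters $\mathcal{D}_\phi(i_\sigma\omega)$ only by an element of $\mathfrak{q}$ and brings one into the harmonic setting.) This is exactly the place where compactness and the K\"ahler hypothesis enter, which is why that hypothesis may be weakened to ``deformations of $(p,q)$-forms on $X$ are unobstructed'', as noted after Theorem \ref{main result 0}. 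The remaining points — the Leibniz compatibility between $\mathcal{D}_\phi$ and $\bar{\partial}_\phi$, the verification that $i_\sigma\omega$ is a deformation over $Spec\,\frac{\mathbb{C}\{t\}}{\mathfrak{p}}$, and the ideal bookkeeping — are routine.
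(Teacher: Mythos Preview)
Your argument is correct and follows essentially the same route as the paper's proof: both fix a deformation $\omega$ of $\omega_0$ via Proposition~\ref{prop-canonical-deformation}, use the Cartan-type identity \eqref{eq-Li} together with the Leibniz rule to arrive at the congruence $i_{[\phi,\sigma]}\omega_0 \equiv \bar{\partial}(i_\sigma\omega_0) - \bar{\partial}_\phi(i_\sigma\omega)$ modulo $\mathfrak{mp}$ (your $\mathcal{D}_\phi$ is the paper's $\bar{\partial}_\phi$ on scalar forms), and then conclude using the unobstructedness of $(p,q)$-form deformations. The only cosmetic difference is in the last step: the paper invokes Lemma~\ref{lem} directly (harmonic projection of $\bar{\partial}_\phi$-exact forms vanishes, so $\bar{\partial}_\phi(i_\sigma\omega)$ is $\bar{\partial}$-exact), whereas you unpack this as an explicit extension of the deformation $i_\sigma\omega$---but since Lemma~\ref{lem} is precisely the mechanism behind that extension, the two are the same argument in slightly different packaging.
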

\begin{proof}Given any $\omega_0\in \mathcal{H}^{r,s}(X)$, let $\omega$ be a deformation of $\omega_0$ on $Spec \frac{\mathbb{C}\{t\}}{\mathfrak{q}}$ as assured by Proposition \ref{prop-canonical-deformation}. We compute modulo $\mathfrak{mp}$,
\begin{align*}
\bar{\partial}i_{\sigma}\omega&=i_{\bar{\partial}\sigma}\omega+(-1)^{q-1}i_{\sigma}\bar{\partial}\omega\\
(\bar{\partial}\omega-\mathcal{L}_{\phi}^{1,0}\omega\in \mathfrak{q}\subseteq\mathfrak{mp})
&\equiv i_{\bar{\partial}\sigma}\omega+(-1)^{q-1}i_{\sigma}\mathcal{L}_{\phi}^{1,0}\omega\\
(~\text{use}~~\eqref{eq-Li}~)
&= i_{\bar{\partial}\sigma}\omega - i_{[\phi,\sigma]}\omega + \mathcal{L}_{\phi}^{1,0}i_{\sigma}\omega\\
&= i_{\bar{\partial}\sigma-[\phi,\sigma]}\omega + \mathcal{L}_{\phi}^{1,0}i_{\sigma}\omega\\
(\bar{\partial}\sigma - [\phi, \sigma]\in \mathfrak{p}~\text{and}~\omega-\omega_0\in\mathfrak{m})
&\equiv i_{\bar{\partial}\sigma-[\phi,\sigma]}\omega_0 + \mathcal{L}_{\phi}^{1,0}i_{\sigma}\omega\\
&= \bar{\partial}i_{\sigma}\omega_0-i_{[\phi,\sigma]}\omega_0 + \mathcal{L}_{\phi}^{1,0}i_{\sigma}\omega,
\end{align*}
which implies
\[
i_{[\phi,\sigma]}\omega_0 + \bar{\partial}_{\phi}i_{\sigma}\omega \equiv \bar{\partial}i_{\sigma}\omega_0.
\]
The conclusion then follows from Lemma \ref{lem}.
\end{proof}

We remark that Theorem \ref{main result} still holds if we only assume the deformations of $(p,q)$-forms on $X$ (possibly non-K\"ahler) are unobstructed. Furthermore, if the canonical bundle $K_X$ of $X$ is trivial, then
\begin{equation}\label{eq-1}
H^{q+1}(X,T^{1,0})\longrightarrow \Hom (H^{n,0}(X), H^{n-1,q+1}(X)):~\xi\longmapsto~i_{\xi}
\end{equation}
is an isomorphism, we recover the following result from \cite[Thm.\,8.4]{Xia19dDol}:
\begin{corollary}\label{coro-}
Let $X$ be a Calabi-Yau manifold, then the deformations of $T_X^{1,0}$-valued $(0,q)$-forms on $X$ are unobstructed.
\end{corollary}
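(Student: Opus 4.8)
The plan is to deduce the Corollary directly from Theorem \ref{main result} and the triviality of $K_X$: on a Calabi--Yau manifold the contraction map \eqref{eq-1} is injective, so Theorem \ref{main result} annihilates not just each $i_{[\phi,\sigma]}\omega_0$ but the obstruction class $[\phi,\sigma]$ itself, and the vanishing of all obstruction classes is precisely unobstructedness.

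In detail, I would first recall that for a Calabi--Yau manifold $X$ of dimension $n$ the bundle $K_X$ is trivial, so $H^{n,0}(X)=H^0(X,K_X)$ is one-dimensional, spanned by a nowhere-vanishing holomorphic $n$-form $\Omega_0$. As a holomorphic form on a compact K\"ahler manifold $\Omega_0$ is harmonic, so $\mathcal{H}^{n,0}(X)=\mathbb{C}\,\Omega_0$, and evaluation at $\Omega_0$ identifies $\Hom (H^{n,0}(X), H^{n-1,q+1}(X))$ with $H_{\bar{\partial}}^{n-1,q+1}(X)$, turning \eqref{eq-1} into the isomorphism $\xi\mapsto i_{\xi}\Omega_0$. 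Now let $\phi$ be a deformation of $X$ over $Spec \frac{\mathbb{C}\{t\}}{\mathfrak{q}}$ and $\sigma$ a deformation of $[y]\in H_{\bar{\partial}}^{0,q}(X,T^{1,0})$ over $Spec \frac{\mathbb{C}\{t\}}{\mathfrak{p}}$, with obstruction class $[\phi,\sigma]\in H^{q+1}(X,T^{1,0})\otimes \frac{\mathfrak{p}}{\mathfrak{mp}}$. Applying Theorem \ref{main result} with $\omega_0=\Omega_0$ gives
\[
i_{[\phi,\sigma]}\Omega_0=0\in H_{\bar{\partial}}^{n-1,q+1}(X)\otimes \frac{\mathfrak{p}}{\mathfrak{mp}}.
\]
Since the map $\xi\mapsto i_{\xi}\Omega_0$ is an isomorphism of $\mathbb{C}$-vector spaces, it remains injective after tensoring with $\frac{\mathfrak{p}}{\mathfrak{mp}}$, so the displayed vanishing forces
\[
[\phi,\sigma]=0\in H^{q+1}(X,T^{1,0})\otimes \frac{\mathfrak{p}}{\mathfrak{mp}}.
\]

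Finally I would invoke the obstruction-extension mechanism recalled in Section \ref{sec-Background-terminology}: because $[\phi,\sigma]$ vanishes, $\sigma$ extends from $Spec \frac{\mathbb{C}\{t\}}{\mathfrak{p}}$ to the next infinitesimal neighborhood, and since this argument applies verbatim at every stage, $\sigma$ extends through all infinitesimal neighborhoods, i.e.\ the deformations of $T_X^{1,0}$-valued $(0,q)$-forms on $X$ are unobstructed, recovering \cite[Thm.\,8.4]{Xia19dDol}. I do not expect a genuine obstacle here beyond Theorem \ref{main result} itself: the cohomological content is entirely in the two displays above, and the remaining points — the identification of \eqref{eq-1} with contraction against $\Omega_0$ and the step-by-step extension — are standard. (If one wants a convergent, not merely formal, family, the requisite estimates on compact K\"ahler manifolds are those already used in \cite{Xia19dDol}.)
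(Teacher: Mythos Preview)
Your proposal is correct and follows essentially the same approach as the paper: the paper notes that when $K_X$ is trivial the map \eqref{eq-1} is an isomorphism, so Theorem \ref{main result} applied with $\omega_0$ a generator of $H^{n,0}(X)$ forces the obstruction class $[\phi,\sigma]$ itself to vanish. You have simply spelled out the details that the paper leaves implicit.
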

In particular, we see that if $K_X$ is trivial and $\dim H^{n,0}(X)$ is a deformation invariant of $X$ (which is equivalent to the condition that deforming $(n,0)$-forms on $X$ is unobstructed by \cite[Coro.\,1.2]{Xia19dDol}) then the deformations of $T_X^{1,0}$-valued $(0,q)$-forms on $X$ are unobstructed.

\vskip 1\baselineskip \textbf{Acknowledgements.} I would like to thank Prof. Kefeng Liu for his constant encouragement and many useful discussions. I would also like to thank Prof. Bing-Long Chen for his constant support.

\bibliographystyle{alpha}

\end{document}